\newtheorem{theorem}{Theorem}[section]
\newtheorem{corollary}[theorem]{Corollary}
\newtheorem{proposition}[theorem]{Proposition}
\newtheorem{lemma}[theorem]{Lemma}
\theoremstyle{definition}
\newtheorem{definition}[theorem]{Definition}
\newtheorem{example}[theorem]{Example}
\newtheorem{remark}[theorem]{Remark}
\newcommand{\leqnomode}{\tagsleft@true}
\newcommand{\reqnomode}{\tagsleft@false}
\DeclareMathOperator{\End}{End}
\DeclareMathOperator{\Aut}{Aut}
\DeclareMathOperator{\Inn}{Inn}
\DeclareMathOperator{\Ab}{Ab}
\DeclareMathOperator{\Gal}{Gal}
\DeclareMathOperator{\GL}{GL}
\DeclareMathOperator{\SL}{SL}
\DeclareMathOperator{\CC}{CC}
\DeclareMathOperator{\Perm}{Perm}
\newcommand{\gen}[1]{\langle #1 \rangle}
\newcommand{\id}{\mathrm{id}}
\newcommand{\ca}{\circ_{\alpha}}
\newcommand{\cb}{\circ_{\beta}}
\newcommand{\pa}{\psi_{\alpha}}
\newcommand{\pb}{\psi_{\beta}}
\newcommand{\stb}{\star_{\beta}}
\newcommand{\E}{\mathscr{E}}
\numberwithin{equation}{section}
\begin{document}
\newtheorem*{thm}{Theorem}

\title{Commutator-central maps, brace blocks, and {H}opf-{G}alois structures on {G}alois extensions}
\author{Alan Koch}
\date{\today       }

\begin{abstract} Let $G$ be a nonabelian group. We show how a collection of compatible endomorphisms $\psi_i:G\to G$ such that $\psi_i([G,G])\le Z(G)$ for all $i$ allows us to construct a family of bi-skew braces called a brace block. We relate this construction to other brace block constructions and interpret our results in terms of Hopf-Galois structures on Galois extensions. We give special consideration to the case where $G$ is of nilpotency class two, and we provide several examples, including finding the maximal brace block containing the group of quaternions.
	\end{abstract}


\maketitle

\section{Introduction}

Skew left braces were introduced by Guarnieri and Vendramin in \cite{GuarnieriVendramin17} to construct non-degenerate, set-theoretic solutions to the Yang-Baxter equation, generalizing the work of Rump \cite{Rump07}, who required his solutions to be involutive. It was shown in \cite{SmoktunowiczVendramin18} that skew left braces could be constructed from subgroups $N\le \Perm(G)$, the permutations of a finite group $G$, which are regular and stabilized under conjugation by $\lambda(g)$, where $\lambda:G\to G$ is left regular representation; we refer to such $N$ as \textit{$G$-stable} subgroups. 

For $G$ a finite nonabelian group, in \cite{Koch21} we show how any $\psi\in\End(G)$ such that $\psi(G)$ is abelian gives rise to a regular, $G$-stable subgroup $N$ of $\Perm(G)$, hence a skew left brace. In fact, one does not need to construct $N$ in order to realize the skew left brace: it can be derived directly from $\psi$, and holds even for $G$ infinite. We then use this $\psi$ to construct a potentially large family of skew braces called a \textit{brace block} in \cite{Koch22}. Since then, these constructions have been generalized in various works, e.g. \cite{BardakovNeshchadimYadav22,CarantiStefanello21,CarantiStefanello22,StefanelloTrappeniers22}. In particular, \cite{CarantiStefanello21} relaxes the condition on $\psi\in\End(G)$: if $\psi(ghg^{-1}h^{-1})\in Z(G)$ for all $g,h\in G$ then the construction of skew braces and skew brace blocks found in \cite{Koch21,Koch22} are still valid. We call such an endomorphism a \textit{commutator-central map}. Furthermore, in the unpublished work \cite{CarantiStefanello21a} the authors expand the brace blocks found in \cite{Koch22} to obtain larger families.

The aim of this work is to present a further generalization to \cite{CarantiStefanello21a} in two meaningful ways. The skew left braces in the brace blocks constructed in \cite{CarantiStefanello21a} are characterized by polynomials in $\psi\mathbb Z[\psi]$: we will characterize ours by picking $\psi\alpha$ (function composition) where $\alpha:G\to G$ comes from a significantly larger set. Additionally, we will show how to construct brace blocks that arise from two different commutator-central maps whose images commute with each other modulo the center.

In the time since first developing this manuscript, the works of \cite{BardakovNeshchadimYadav22,CarantiStefanello21,StefanelloTrappeniers22} have emerged. The results of the work here can be recast using the theories presented in these papers. We explain the connections between our work and these other theories (which themselves are interconnected). The constructions presented here can be viewed as a nice, concrete set of examples for the other works.

The paper is organized as follows. After introducing necessary concepts and, importantly, notation, we move on to our theory of brace blocks arising from a commutator-central map on a nonabelian group. We then make the connection to all of the other works mentioned above. As before, while the initial theory of braces arising from abelian maps (which built upon the theory of fixed-point free abelian maps in \cite{Childs13}, itself built upon \cite{ByottChilds11}, and \cite{KochStordyTruman20}) arose from the classification of regular, $G$-stable subgroups of $\Perm(G)$, one can study the construction without reference to these subgroups; that is the approach we take here. However, the regular, $G$-stable subgroups can be easily constructed, and we show how they give rise to Hopf-Galois structures on Galois extensions. We then return to the brace blocks, and consider the case where $G$ has nilpotency class two; we show how the theory greatly simplifies and how one can realize ``opposite'' braces (in the sense of \cite{KochTruman20}) in a brace block. 

Finally, we give several examples. We construct a maximal brace block on the quaternion group, and use the symmetric group to illustrate the necessity of working with commuting commutator-central maps. We show how the brace block constructed in \cite[Ex. 6.4]{Koch22}, starting with the nonabelian group of order $pq$, with $p,q$ prime and $p\equiv 1\pmod q$, can be greatly simplified in this new theory. The original motivation for the study of abelian maps was to study Hopf-Galois extensions on finite field extensions; we conclude this paper with an example which constructs a brace block of infinite size.

Throughout, $G$ is assumed to be a nonabelian group except where specified in Section \ref{Hopf}. We denote its center by $Z(G)$ and its commutator subgroup by $[G,G]$. Furthermore, for $g\in G$ we let $C(g)\in\Aut(G)$ denote conjugation by $g$. For $n\in\mathbb Z^{+}$ we let $C_n$ denote the cyclic group of order $n$.

\section{Skew Left Braces, Bi-Skew Braces, and Brace Blocks}

Here we provide the necessary definitions and tools for the main results. 
We start by reminding the reader of the concept of a skew left brace. While the concept is well-known, we provide a definition here in order to set notation.

\begin{definition}
	A \textit{skew left brace} is a triple $(B,\cdot,\circ)$ where $B$ is a set, $(B,\cdot)$ and $(B,\circ)$ are groups, and the following \textit{brace relation} holds:
	\[a\circ(b\cdot c) = (a\circ b)\cdot a^{-1}\cdot (a\circ c),\;a,b,c\in B,\]
	where $a^{-1}$ is the inverse to $a\in (G,\cdot)$.
\end{definition}

Simple examples include $(G,\cdot,\cdot)$ where $(G,\cdot)$ is a group; and $(G,\cdot,\cdot')$ where $g\cdot'h = h\cdot g$. We call these the \textit{trivial brace on $G$} and the \textit{almost trivial brace} on $G$ respectively. 

Throughout, we will refer to a skew left brace as simply a \textit{brace}. Historically, the construction above is a generalization of a ``left brace'' which requires $(G,\cdot)$ to be abelian. Furthermore, when no confusion arises we will write $a\cdot b$ as $ab$.

It is well-known that if $(B,\cdot,\circ)$ is a brace then the two groups $(B,\cdot)$ and $(B,\circ)$ share an identity which we denote $1_B$. We will denote the inverse to $a\in(B,\circ)$ by $\overline a$.

Notice that the brace relation is not symmetric: in general we do not have $a(b\circ c)=(ab)\circ \overline a \circ (ac)$. However, if this relation also occurs we see that both $(B,\cdot,\circ)$ and $(B,\circ,\cdot)$ are braces, a structure first considered in \cite{Childs19} and called a \textit{bi-skew brace}.

Braces were developed to provide set-theoretic solutions to the Yang-Baxter equation. A \textit{set-theoretic solution to the Yang-Baxter equation} is a set $B$ and a map $R:B\times B\to B\times B$ such that $(B\times\id)(\id \times B)(B\times \id)=(\id \times B)(B\times \id)(\id \times B):B^3\to B^3$. Braces give non-degenerate set-theoretic solutions to the YBE: if $(B,\cdot,\circ)$ is a brace then 
\[R(a,b)=(a^{-1}(a\circ b), \overline{a^{-1}(a\circ b)}\circ a \circ b)\]
is a solution, and its inverse (see, e.g., \cite[Th. 4.1]{KochTruman20}) is
\[R'(a,b)=((a\circ b)a^{-1}, \overline{(a\circ b)a^{-1}}\circ a \circ b).\]

Our focus here will be in constructing families of interconnected braces, first considered in \cite{Koch22}.

\begin{definition}
	A \textit{brace block} consists of a set $B$ together with a collection of binary operations $\{\circ_i:i\in\mathscr I\}$ on $B$ such that $(B,\circ_i,\circ_j)$ is a brace for each $i,j\in\mathscr I$.
\end{definition}

We will denote a brace block by $(B,\mathscr O)$, where $\mathscr O=\{\circ_i:i\in\mathscr I\}$. Note that every brace in a brace block is necessarily bi-skew.

We will now introduce a natural notion of substructures.

\begin{definition}
	Let $(B,\mathscr O)$ be a brace block. Let $A\subseteq B$ and $\mathscr O'\subseteq \mathscr O$ be chosen such that $A$ is a subgroup of $(B,\circ)$ for all $\circ\in\mathscr O'$. Then $(A,\mathscr O')$ is a \textit{sub-brace block} of $(B,\mathscr O)$ and we write $(A,\mathscr O')\subseteq (B,\mathscr O)$. Alternatively, we say $(B,\mathscr O)$ \textit{contains} $(A,\mathscr O')$.
	
	A sub-brace block $(A,\mathscr O')\subseteq (B,\mathscr O)$ is \textit{proper} if either $A\ne B$ or $\mathscr O'\ne\mathscr O$, and we say that $(B,\mathscr O)$ is \textit{maximal} if it is not contained in another proper brace block.
\end{definition} 

If we let $A=B$ then any $\mathscr O'\subseteq \mathscr O$ produces a sub-brace block. This is, in fact, the only type of sub-brace block of interest to us here. We have introduced the more general definition above for use in future work.

\begin{example}
	The trivial brace $(G,\cdot,\cdot)$ on a group $G$ gives the brace block $(G,\{\cdot\})$. Similarly, the almost trivial brace on a nonabelian group $G$ gives us the brace block $(G,\{\cdot,\cdot'\})$.
\end{example}

\begin{example}
	More generally, if $(B,\cdot,\circ)$ is any bi-skew brace, then $(B,\{\cdot,\circ\})$ is a brace block.
\end{example}

We will see examples of much larger brace blocks later.

\section{The Main Construction}

In this section, we will show how to construct large families of brace blocks from a single commutator central map, encompassing and extending the results in \cite{Koch22} and \cite{CarantiStefanello21a}. 

We say $\psi\in\End(G)$ is \textit{commutator-central} if $\psi([G,G])\le Z(G)$. Equivalently, $\psi$ is commutator-central if and only if for $g,h\in G$ there exists a $z\in Z(G)$ such that $\psi(gh)=\psi(hg)z$; or simply $\psi(gh)\equiv \psi(hg)\pmod{Z(G)}$. Of course, any abelian map is commutator-central, taking $z=1_G$ above. We denote the set of commutator-central maps by $\CC(G)$ and the set of abelian maps by $\Ab(G)$.

Let $\E$ denote the free group generated by $\End(G)$. Then $\E$ acts on $G$ as follows: for $\alpha=n_1\phi_1+\cdots + n_t\phi_t,\;\phi_1,\dots,\phi_t\in \End(G)$, 
\[\alpha(g) = \phi_1(g^{n_1})\phi_2(g^{n_2})\cdots \phi_t(g^{n_t}).\]
Any $n\in\mathbb Z$ gives rise to an element of $\E$, namely $n\cdot\mathrm{id}_G$, which we also denote $n$. In particular, $0\in\E$ is the trivial map and $1\in\E$ is the identity.

Note that we can express any $\alpha\in\E$ as a sum/difference of endomorphisms (without integer coefficients), where $\phi_1-\phi_2$ is defined in the obvious way.

We shall be interested in compositions $\psi\alpha$, where $\psi\in\CC(G)$ and $\alpha\in\E$. For compactness, we will denote $\psi\alpha$ by $\pa$.

We start with the following basic properties.
\begin{lemma}\label{theLemma} For $\psi\in\CC(G),\;\alpha\in \E$ we have:
	\begin{enumerate}
		\item\label{a} $\pa(1_G)=1_G$;
		\item\label{d} $\psi_{\alpha+\beta}\equiv\psi_{\beta+\alpha}\pmod {Z(G)}$;
		\item\label{b} $\pa(gh)\equiv \pa(g)\pa(h) \pmod {Z(G)}$;
		\item\label{e} $\pa(g)^{-1}\equiv \pa(g^{-1})\pmod {Z(G)}$;
		\item\label{f} $\pa(gh)\pa(g)^{-1}\equiv \pa(h)\pmod {Z(G)}$.
	\end{enumerate}
\end{lemma}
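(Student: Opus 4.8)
The plan is to reduce all five items to a single structural observation: since $\psi$ is commutator-central, its image is abelian modulo the center, i.e. $[\psi(G),\psi(G)]\le Z(G)$. Indeed, for $a=\psi(g)$ and $b=\psi(h)$ in $\psi(G)$ we have $[a,b]=\psi([g,h])\in\psi([G,G])\le Z(G)$. Because $\pa=\psi\alpha$ takes values in $\psi(G)$, any two outputs $\pa(x),\pa(y)$ commute modulo $Z(G)$; more generally every $\psi\phi$ with $\phi\in\End(G)$ has image in $\psi(G)$. Hence, letting $\pi\colon G\to G/Z(G)$ be the quotient map, the subgroup $\pi(\psi(G))$ is abelian, and I may freely rearrange products of such values once I pass to this quotient. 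This is the engine behind everything below, and I would state it as a preliminary observation.

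Items (\ref{a}) and (\ref{d}) are then immediate. For (\ref{a}), each $\phi_i$ is an endomorphism, so $\alpha(1_G)=\phi_1(1_G^{n_1})\cdots\phi_t(1_G^{n_t})=1_G$, whence $\pa(1_G)=\psi(1_G)=1_G$ as a genuine equality. For (\ref{d}), the action of $\E$ gives $(\alpha+\beta)(g)=\alpha(g)\beta(g)$ and $(\beta+\alpha)(g)=\beta(g)\alpha(g)$, so the claim is exactly $\psi(\alpha(g)\beta(g))\equiv\psi(\beta(g)\alpha(g))\pmod{Z(G)}$, which is the defining property of a commutator-central map applied to the pair $\alpha(g),\beta(g)$.

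Item (\ref{b}) is the crux and carries the actual work. Writing $\alpha=n_1\phi_1+\cdots+n_t\phi_t$, I would expand $\pa(gh)=\prod_i\psi\phi_i((gh)^{n_i})$ using that $\psi$ and each $\phi_i$ are homomorphisms, giving $\psi\phi_i((gh)^{n_i})=(\psi\phi_i(g)\,\psi\phi_i(h))^{n_i}$. Every factor lies in $\psi(G)$, so modulo $Z(G)$ I may expand $(\psi\phi_i(g)\,\psi\phi_i(h))^{n_i}\equiv\psi\phi_i(g^{n_i})\,\psi\phi_i(h^{n_i})$ and then reorder the entire product to collect all $g$-factors before all $h$-factors, arriving at $\prod_i\psi\phi_i(g^{n_i})\cdot\prod_i\psi\phi_i(h^{n_i})=\pa(g)\pa(h)$. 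The only subtlety is the repeated invocation of the commuting-mod-$Z(G)$ property across a product of arbitrary length; I expect this bookkeeping to be the main (though routine) obstacle, and I would keep it clean by carrying out the computation entirely inside the abelian quotient $\pi(\psi(G))$.

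Items (\ref{e}) and (\ref{f}) follow formally. For (\ref{e}), applying (\ref{b}) to $g$ and $g^{-1}$ together with (\ref{a}) yields $\pa(g)\pa(g^{-1})\equiv\pa(gg^{-1})=\pa(1_G)=1_G\pmod{Z(G)}$, so $\pa(g^{-1})\equiv\pa(g)^{-1}$. For (\ref{f}), item (\ref{b}) gives $\pa(gh)\pa(g)^{-1}\equiv\pa(g)\pa(h)\pa(g)^{-1}$, and since $\pa(g),\pa(h)\in\psi(G)$ commute modulo $Z(G)$ the conjugation is trivial in the quotient, leaving $\pa(h)$.
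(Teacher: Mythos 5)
Your proof is correct, and on the crux item (\ref{b}) it takes a genuinely different route from the paper's. Your preliminary observation is valid: $[\psi(g),\psi(h)]=\psi([g,h])\in\psi([G,G])\le Z(G)$, so the image of $\psi(G)$ in $G/Z(G)$ is an abelian subgroup containing every value of every $\psi\phi$ with $\phi\in\End(G)$. The paper instead proves (\ref{b}) by induction on the number of endomorphism summands of $\alpha$: the base case $\alpha\in\End(G)$ makes $\pa$ itself an endomorphism, and the inductive step $\beta=\alpha+\phi$ is a chain of congruences carried out in $G$, repeatedly swapping adjacent factors via the commutator-central property. You give a direct, closed-form computation entirely inside the abelian group $\pi(\psi(G))$, expanding $\pa(gh)=\prod_i\psi\phi_i((gh)^{n_i})$, using $(xy)^n=x^ny^n$ there, and reordering to reach $\pa(g)\pa(h)$. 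Each approach buys something: yours removes the induction and the bookkeeping of central error terms, and it treats negative coefficients $n_i$ (equivalently, differences of endomorphisms) uniformly --- a case the paper's induction, which only displays the step $\beta=\alpha+\phi$, leaves implicit; the paper's argument stays inside $G$ with explicit central factors, which matches how the lemma is applied later, where one substitutes representatives mod $Z(G)$ into conjugation formulas (as in the proof of Theorem \ref{bigTheorem}). The remaining items are handled essentially identically in the two proofs: (\ref{d}) is the commutator-central property applied to the pair $\alpha(g),\beta(g)$, (\ref{e}) is (\ref{b}) with $h=g^{-1}$, and (\ref{f}) follows from (\ref{b}) together with commutation modulo the center.
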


\begin{proof}
	That (\ref{a}) holds is obvious since $\alpha(1_G)=1_G$. Also, for all $\alpha,\beta\in \E$ we have
	\[\psi(\alpha(g)\beta(g))\psi(\alpha(g)^{-1}\beta(g)^{-1})=\psi(\alpha(g)\beta(g)\alpha(g)^{-1}\beta(g)^{-1})\in Z(G),\] thus  $\psi(\alpha(g)\beta(g))\equiv\psi(\beta(g)\alpha(g)\pmod {Z(G)}$ and (\ref{d}) holds.

	If $\alpha$ is an endomorphism then so is $\pa$ and (\ref{b}) is clear. Now suppose (\ref{b}) holds from some $\alpha\in \E$ and let $\beta=\alpha+\phi,\;\phi\in\End(G)$. Then
	\begin{align*}
		\pb(gh)&=\psi(\alpha(gh)\phi(gh))\\
		&=\psi(\alpha(gh)\phi(g)\phi(h))\\
		&=\psi(\alpha(gh))\psi(\phi(g)\phi(h)) \\
		&\equiv \psi(\alpha(g))\psi(\alpha(h)) \psi(\phi(g))\psi(\phi(h)) \pmod{Z(G)}\\
		&\equiv \psi(\alpha(g)\alpha(h)\phi(g)\phi(h))\pmod {Z(G)}\\
		&=\pa(g)\psi(\alpha(h)\phi(g))\psi(\phi(h))\\
		&\equiv\pa(g)\psi(\phi(g)\alpha(h))\psi(\phi(h)) \pmod {Z(G)}\\
		&=\pb(g)\pb(h).
	\end{align*}
Thus, by induction, (\ref{b}) holds. (\ref{e}) follows by setting $h=g^{-1}$. Finally, (\ref{f}) holds by (\ref{b}) and the fact that $\psi\in\CC(G)$.

\end{proof}

We are now in a position to give our brace construction. For $\psi\in\CC(G),\;\alpha\in\E$, define a binary operation $\circ_{\psi,\alpha}$ on $G$ by \[ g\circ_{\psi,\alpha} h =g\pa(g)h\pa(g)^{-1}.\]

When $\psi$ is understood we will suppress the notation and write $g\ca h$. Evidently, if $x \equiv \pa(g) \pmod {Z(G)}$ then $g\ca h = gxhx^{-1}$. This will allow us to use the results in Lemma \ref{theLemma} to simplify computations in the proofs that follow.

\begin{proposition}
	Let $\psi\in\CC(G),\;\alpha\in \E$. Then $(G,\ca)$ is a group.
\end{proposition}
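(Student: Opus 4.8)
The plan is to verify the three group axioms for $\ca$, organizing the work so that associativity is isolated as the one substantive computation. The guiding observation is that the operation can be written as $g \ca h = g\,C(\pa(g))(h)$, where $C(\pa(g))\in\Inn(G)$ is conjugation by $\pa(g)$; by the remark preceding the proposition this inner automorphism depends only on the class of $\pa(g)$ modulo $Z(G)$ (since $C$ factors through $G/Z(G)$), so throughout I am free to replace $\pa(g)$ by any representative congruent to it mod $Z(G)$, which is what makes Lemma \ref{theLemma} directly applicable.

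The identity and inverse are quick. Since $\pa(1_G)=1_G$ by Lemma \ref{theLemma}(\ref{a}), I get $1_G\ca h = h$ and $g\ca 1_G = g$, so $1_G$ is a two-sided identity. For a right inverse of $g$, solving $g\,\pa(g)\,x\,\pa(g)^{-1}=1_G$ forces $x=\pa(g)^{-1}g^{-1}\pa(g)$; once associativity and the two-sided identity are in hand, the standard monoid argument (if $a\ca b=1_G$ and $b\ca c=1_G$ then $a=a\ca(b\ca c)=(a\ca b)\ca c=c$, whence $b\ca a=1_G$) upgrades this right inverse to a two-sided inverse, so no separate left-inverse check is needed.

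Everything therefore rests on associativity, which I would deduce from the single congruence
\[\pa(g\ca h)\equiv \pa(g)\pa(h)\pmod{Z(G)}.\]
Granting this, $C(\pa(g\ca h))=C(\pa(g))C(\pa(h))$ as automorphisms of $G$, and since each $C(\pa(g))$ is a homomorphism a direct expansion gives $(g\ca h)\ca k = (g\ca h)\,C(\pa(g))C(\pa(h))(k)=g\,C(\pa(g))\big(h\,C(\pa(h))(k)\big)=g\ca(h\ca k)$.

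Proving the displayed congruence is the main obstacle, and it is precisely where commutator-centrality enters. Expanding $g\ca h = g\,\pa(g)\,h\,\pa(g)^{-1}$ and applying Lemma \ref{theLemma}(\ref{b}) and (\ref{e}) yields $\pa(g\ca h)\equiv \pa(g)\,\pa(\pa(g))\,\pa(h)\,\pa(\pa(g))^{-1}\pmod{Z(G)}$, so the task reduces to collapsing the inner conjugation. The key point is that $\pa=\psi\alpha$ always takes values in $\psi(G)$, and that $[\psi(G),\psi(G)]=\psi([G,G])\le Z(G)$, so any two elements of $\psi(G)$ commute modulo $Z(G)$. Applying this to $\pa(\pa(g))$ and $\pa(h)$, both of which lie in $\psi(G)$, gives $\pa(\pa(g))\,\pa(h)\,\pa(\pa(g))^{-1}\equiv \pa(h)\pmod{Z(G)}$, and the congruence follows. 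I expect this last collapse to be the crux: it is exactly the property $\psi([G,G])\le Z(G)$ that forces the nested term $\pa(\pa(g))$ to commute past $\pa(h)$ modulo the center.
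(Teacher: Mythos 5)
Your proof is correct and takes essentially the same route as the paper: both reduce everything to the congruence $\pa(g\ca h)\equiv\pa(g)\pa(h)\pmod{Z(G)}$, which is exactly the step the paper invokes via Lemma \ref{theLemma} (\ref{b}) in the second line of its associativity computation, with the identity and the inverse $\pa(g)^{-1}g^{-1}\pa(g)$ handled as simple computations. Your writeup merely phrases the computation in terms of the inner automorphisms $C(\pa(g))$ and is, if anything, more explicit than the paper in justifying the collapse of the nested term $\pa(\pa(g))$ past $\pa(h)$ using $\psi([G,G])\le Z(G)$ (equivalently, parts (\ref{e}) and (\ref{f}) of the lemma), a detail the paper leaves implicit.
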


\begin{proof}
	That $1_G$ is the identity is a simple computation, as is that the inverse to $g\in(G,\ca)$ is $\pa(g)^{-1}g^{-1}\pa(g)$, To show associativity we have, by Lemma \ref{theLemma} (\ref{b}),
	\begin{align*}
		(g\ca h)\ca k &= (g\pa(g)h\pa(g)^{-1})\ca k\\
		&=(g\pa(g)h\pa(g)^{-1})\pa(g)\pa(h)k\pa(h)^{-1}\pa(g)^{-1}\\
		&=g\pa(g)h\pa(h)k\pa(h)^{-1}\pa(g)^{-1}\\
		&=g\pa(g)(h\ca k)\pa(g)^{-1}\\
		&=g\ca(h\ca k).
	\end{align*}
\end{proof}

\begin{example}\label{AbAK}
	Let $\psi\in\Ab(G)$, and let $\alpha = -1$. Then
	\[g\ca h = g\pa(g)^{-1}h\pa(g)=g\psi(g^{-1})h\psi(g),\]
	obtaining the circle group obtained in \cite{Koch21} for $\psi\in\Ab(G)$. As pointed out in \cite[Th. 1.1]{CarantiStefanello21}, this construction holds for general $\psi\in\CC(G)$.
\end{example}

\begin{example}\label{AbCS}
	Let $\psi\in\Ab(G)$, and let $\alpha = 1$. Then
	\[g\ca h = g\pa(g)h\pa(g)^{-1}=g\psi(g^{-1})h\psi(g),\]
	obtaining \cite[Th. 1.2]{CarantiStefanello21a} in the case $\psi\in\CC(G)$.
\end{example}

We now arrive at the main result.

\begin{theorem}\label{bigTheorem}
	Let $\psi,\psi'\in\CC(G)$ satisfy $[\psi(G),\psi'(G)]\subset Z(G)$. For $\alpha,\beta\in \E$ let  
	\[g\ca h = g\pa(g)h\pa(g)^{-1},\;g\stb h = g\pb'(g) h\pb(g)^{-1},\;g,h\in G.\]
	Then $(G,\ca,\stb)$ is a brace.
\end{theorem}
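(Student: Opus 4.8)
The plan is to verify the two group structures and then the brace relation directly, exploiting the reductions that the mod-$Z(G)$ congruences of Lemma~\ref{theLemma} permit. Since $\psi,\psi'\in\CC(G)$, the preceding Proposition, applied to the pairs $(\psi,\alpha)$ and $(\psi',\beta)$, shows that $(G,\ca)$ and $(G,\stb)$ are both groups sharing the identity $1_G$, and that the inverse of $a$ in $(G,\ca)$ is $\pa(a)^{-1}a^{-1}\pa(a)$. Reading $\cdot=\ca$ and $\circ=\stb$, it remains to establish the brace relation; fixing $a,b,c\in G$ and abbreviating $u=\pa(a)$, $v=\pb'(a)$, $p=\pa(b)$, this is the identity
\[a\stb(b\ca c)=(a\stb b)\ca\bigl(u^{-1}a^{-1}u\bigr)\ca(a\stb c).\]

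The workhorse is the observation, recorded just before the Proposition, that in any expression $g\ca h$ one may replace the conjugating factor $\pa(g)$ by anything congruent to it modulo $Z(G)$, since a central factor cancels against its inverse. To exploit this I would first isolate the key commuting fact: because $\psi$ is an endomorphism, $[\psi(g),\psi(h)]=\psi([g,h])\in\psi([G,G])\le Z(G)$, so $\psi(G)$ --- and likewise $\psi'(G)$ --- is abelian modulo $Z(G)$, while the hypothesis supplies $[\psi(G),\psi'(G)]\subset Z(G)$. Consequently any two of the twisting elements $\pa(x),\pb'(y)$ commute modulo $Z(G)$. In particular nested terms collapse: since $\pa(u)\in\psi(G)$ commutes with $u\in\psi(G)$ modulo the center, Lemma~\ref{theLemma} gives $\pa\bigl(u^{-1}a^{-1}u\bigr)\equiv\pa(u)^{-1}u^{-1}\pa(u)\equiv u^{-1}\pmod{Z(G)}$.

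With these reductions the computation is short. The left-hand side expands to $a\stb(bpcp^{-1})=avbpcp^{-1}v^{-1}$. On the right, using the collapse above, $\bigl(u^{-1}a^{-1}u\bigr)\ca(a\stb c)$ simplifies to $u^{-1}vcv^{-1}u$; composing on the left with $a\stb b=avbv^{-1}$ under $\ca$ and using $\pa(avbv^{-1})\equiv up\pmod{Z(G)}$ (again by the commuting fact) reduces the right-hand side to $avbv^{-1}pvcv^{-1}p^{-1}$, the central commutator produced by conjugating by $u$ cancelling against its inverse. Cancelling the common prefix $avb$, it remains to check $pcp^{-1}v^{-1}=v^{-1}pvcv^{-1}p^{-1}$, which follows by tracking the single central commutator $[p,v]$: here $p=\pa(b)\in\psi(G)$ and $v=\pb'(a)\in\psi'(G)$ commute modulo $Z(G)$, and the resulting central corrections cancel exactly.

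The main obstacle, and the reason one cannot merely work modulo $Z(G)$ throughout, is that the brace relation is an \emph{exact} identity: the congruences give exact conjugator-replacements, but every central commutator introduced when permuting twisting elements must be shown to cancel against a partner. Organizing the computation so that each such commutator appears together with its inverse --- which is precisely what the pairwise commuting-modulo-center property guarantees --- is the crux, and it is exactly here that both commutator-central hypotheses and the cross-condition $[\psi(G),\psi'(G)]\subset Z(G)$ are needed. By the symmetry in the roles of $(\psi,\alpha)$ and $(\psi',\beta)$, the same computation with the roles interchanged yields the reverse relation, so $(G,\ca,\stb)$ is in fact bi-skew.
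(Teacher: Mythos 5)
Your proposal is correct and takes essentially the same route as the paper: a direct verification of the brace relation using the mod-$Z(G)$ congruences of Lemma~\ref{theLemma}, the fact that $\psi(G)$, $\psi'(G)$ are abelian modulo the center, and the cross-hypothesis $[\psi(G),\psi'(G)]\subset Z(G)$ to cancel the central commutators exactly. The only differences are organizational---you reduce both sides to a common normal form where the paper simplifies the right-hand side to the left in a single chain---plus your (correct) closing remark that symmetry in $(\psi,\alpha)$ and $(\psi',\beta)$ yields bi-skewness, which the paper notes separately.
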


\begin{proof}
	We merely need to check that the brace relation holds. Write $\widetilde g$ for the inverse to $g\in(G,\ca)$. Since $\widetilde g=\pa(g)^{-1}g^{-1}\pa(g)$ we know $\pa(\widetilde g)\equiv \pa(g^{-1}) \pmod {Z(G)}$ and we get
	\begin{align*}
		(g\stb h)\ca \widetilde g \ca (g\stb k) &= (g\stb h) \ca ((\pa(g)^{-1}g^{-1}\pa(g))\pa(g^{-1})(g\pb'(g)k\pb'(g)^{-1})\pa(g^{-1})^{-1})\\
		&=(g\pb'(g)h\pb'(g)^{-1}) \ca (\pa(g)^{-1}\pb'(g)k\pb'(g)^{-1}\pa(g^{-1})^{-1})\\
		&=g\pb'(g)h\pb'(g)^{-1}(\pa(h)\pa(g))(\pa(g)^{-1}\pb'(g)k\pb'(g)^{-1}\pa(g^{-1})^{-1})(\pa(g)^{-1}\pa(h)^{-1})\\
		&= g\pb'(g)h\pa(h)k\pa(h)^{-1}\pb'(g)^{-1}\\
		&=g\cb(h\ca k).
	\end{align*}
\end{proof}

From this we immediately obtain a brace block.
\begin{corollary}\label{bigCor}
	Let $\psi_1,\psi_2,\dots \psi_t\in\CC(G)$ have the property that $[\psi_i(G),\psi_j(G)]\subset Z(G)$. Then
	\[(G,\bigcup\limits_{i=1}^t\{\circ_{\psi_i,\alpha}:\alpha\in\E\})\]
	is a brace block.
\end{corollary}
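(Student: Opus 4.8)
The plan is to deduce this from Theorem~\ref{bigTheorem} by verifying the defining property of a brace block one pair of operations at a time. Writing $\mathscr O=\bigcup_{i=1}^t\{\circ_{\psi_i,\alpha}:\alpha\in\E\}$, by definition $(G,\mathscr O)$ is a brace block precisely when $(G,\circ,\circ')$ is a brace for every ordered pair $\circ,\circ'\in\mathscr O$. So I would fix two arbitrary operations from the collection, say $\circ_{\psi_i,\alpha}$ and $\circ_{\psi_j,\beta}$, and establish that $(G,\circ_{\psi_i,\alpha},\circ_{\psi_j,\beta})$ is a brace by applying Theorem~\ref{bigTheorem} with the choices $\psi=\psi_i$, $\psi'=\psi_j$ and the given $\alpha,\beta$. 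The theorem outputs exactly the two symmetric operations appearing in $\mathscr O$, so there is nothing to reconcile between the notation of the theorem and that of the corollary.

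The only input the theorem needs is the hypothesis $[\psi_i(G),\psi_j(G)]\subset Z(G)$, which for $i\ne j$ is precisely the standing assumption of the corollary. The reverse-order pair $(G,\circ_{\psi_j,\beta},\circ_{\psi_i,\alpha})$ is handled identically after interchanging the roles of $\psi$ and $\psi'$; here I would note only that the condition is symmetric in its two arguments, $[\psi_i(G),\psi_j(G)]=[\psi_j(G),\psi_i(G)]$, so swapping orders demands no new hypothesis. Consequently, once the remaining case is settled, every ordered pair drawn from $\mathscr O$ yields a brace and $(G,\mathscr O)$ is a brace block.

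The step that requires a moment's care --- and the one I would call out explicitly --- is the diagonal case $i=j$, namely two operations $\circ_{\psi_i,\alpha}$ and $\circ_{\psi_i,\beta}$ built from a single commutator-central map. Applying Theorem~\ref{bigTheorem} with $\psi=\psi'=\psi_i$ now requires $[\psi_i(G),\psi_i(G)]\subset Z(G)$, which is not literally listed among the hypotheses (or, if one reads the assumption as ranging over all $i,j$, is at least worth confirming as no genuine restriction). I would dispose of this by showing it holds automatically from commutator-centrality: since $\psi_i$ is an endomorphism, $[\psi_i(g),\psi_i(h)]=\psi_i([g,h])\in\psi_i([G,G])\le Z(G)$ for all $g,h\in G$, and such commutators generate $[\psi_i(G),\psi_i(G)]$. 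Thus the theorem's hypothesis is met on the diagonal for free, the argument of the preceding paragraph applies uniformly to all pairs, and the proof is complete. This diagonal case also recovers the single-map brace blocks of \cite{Koch22} and \cite{CarantiStefanello21a} as the sub-collections of $\mathscr O$ with a fixed $\psi_i$, so no real difficulty remains beyond this bookkeeping.
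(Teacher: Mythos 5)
Your proof is correct and takes the same route as the paper, which simply states that the block is immediate from Theorem~\ref{bigTheorem} applied to each ordered pair of operations. Your explicit treatment of the diagonal case $i=j$ --- noting that $[\psi_i(G),\psi_i(G)]\le Z(G)$ holds automatically because $[\psi_i(g),\psi_i(h)]=\psi_i([g,h])\in\psi_i([G,G])\le Z(G)$ --- is exactly the bookkeeping the paper leaves implicit, and it is carried out correctly.
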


We can then apply Corollary \ref{bigCor} to obtain solutions to the Yang-Baxter equation.

\begin{corollary}
	Let $\psi_1,\psi_2,\dots, \psi_t\in\CC(G)$ have the property that $[\psi_i(G),\psi_j(G)]\subset Z(G)$ for all $1\le i,j\le t$. Then for all $1\le i,j\le t,\;\alpha,\beta\in \E$,
	\begin{align*}
		R(g,h) &= \left(\widetilde{g}\circ_{\psi_i,\alpha}(g\circ_{\psi_j,\beta} h),\overline{\widetilde{g}\circ_{\psi_i,\alpha}(g\circ_{\psi_j,\beta} h)}\circ_{\psi_j,\beta}g\circ_{\psi_j,\beta}h\right)\\
			R'(g,h) &= \left((g\circ_{\psi_j,\beta}h)\circ_{\psi_i,\alpha}\widetilde{g} ,\overline{(g\circ_{\psi_j,\beta} h)\circ_{\psi_i,\alpha}\widetilde{g}}\circ_{\psi_j,\beta}g\circ_{\psi_j,\beta}h\right)
	\end{align*}
are inverse non-degenerate, set-theoretic solutions to the Yang-Baxter equation, where $g\circ_{\psi_i,\alpha} \widetilde g = g\circ_{\psi_j,\beta}\overline g = 1_G$. Furthermore, $R$ is involutive (and hence $R'=R$) if and only if $(G,\circ_{\psi_i,\alpha})$ is abelian.
\end{corollary}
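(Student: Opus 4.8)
The plan is to deduce this final corollary almost entirely from the general brace-theoretic machinery recalled in Section 2, using Theorem \ref{bigTheorem} (equivalently Corollary \ref{bigCor}) as a black box. The key observation is that for fixed indices $i,j$ and fixed $\alpha,\beta\in\E$, the triple $(G,\circ_{\psi_i,\alpha},\circ_{\psi_j,\beta})$ is a brace: this is exactly what Corollary \ref{bigCor} provides, since both operations belong to the displayed brace block under the hypothesis $[\psi_i(G),\psi_j(G)]\subset Z(G)$. Once we know this, the formulas for $R$ and $R'$ are not something we need to verify from scratch; they are the universal formulas recorded in Section 2 for the YBE solution attached to an \emph{arbitrary} brace $(B,\cdot,\circ)$, with the roles $\cdot=\circ_{\psi_i,\alpha}$ and $\circ=\circ_{\psi_j,\beta}$. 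So the first step is simply to match notation: the $a^{-1}$ of the generic formula becomes $\widetilde g$ (the inverse in $(G,\circ_{\psi_i,\alpha})$), the generic $\circ$ becomes $\circ_{\psi_j,\beta}$ with overline-inverse $\overline{\phantom{g}}$, and $a\circ b$ becomes $g\circ_{\psi_j,\beta}h$. Under this dictionary the stated $R,R'$ are verbatim the solutions and their inverses quoted from \cite[Th. 4.1]{KochTruman20}, so they are inverse non-degenerate set-theoretic solutions to the YBE.

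Next I would treat the involutivity claim, which is the only part carrying genuine content beyond bookkeeping. Here $R$ is involutive precisely when $R=R'$, and comparing the two formulas we see that $R=R'$ as soon as the first coordinates agree, i.e. $\widetilde g\circ_{\psi_i,\alpha}(g\circ_{\psi_j,\beta}h)=(g\circ_{\psi_j,\beta}h)\circ_{\psi_i,\alpha}\widetilde g$ for all $g,h$ (the second coordinates then agree automatically since they are built from the first). Writing $a=\widetilde g$ and $b=g\circ_{\psi_j,\beta}h$, and noting that as $h$ ranges over $G$ the element $b$ ranges over all of $G$ (left translation in the group $(G,\circ_{\psi_j,\beta})$ is a bijection), the condition becomes $a\circ_{\psi_i,\alpha}b=b\circ_{\psi_i,\alpha}a$ for all $a,b\in G$; that is, $(G,\circ_{\psi_i,\alpha})$ is abelian. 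This gives the ``only if'' direction, and the ``if'' direction follows by reading the same equivalence backwards.

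The main obstacle, such as it is, lies in making the involutivity argument airtight rather than in any of the brace-relation computations (those are subsumed by Theorem \ref{bigTheorem}). Specifically, I must be careful that the equivalence ``$R=R'$ iff $(G,\circ_{\psi_i,\alpha})$ is commutative'' is stated for the correct group: the first coordinates of $R,R'$ involve the operation $\circ_{\psi_i,\alpha}$ sandwiching factors, so it is the commutativity of $(G,\circ_{\psi_i,\alpha})$ — \emph{not} of $(G,\circ_{\psi_j,\beta})$ — that is forced, consistent with the statement. I would also note the standard fact that for a general brace the solution $R$ is involutive exactly when the additive (here $\circ_{\psi_i,\alpha}$) group is abelian, so one may alternatively cite this directly; but giving the short self-contained comparison above is cleaner and avoids relying on an unstated lemma. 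No step requires the hypothesis $[\psi_i(G),\psi_j(G)]\subset Z(G)$ beyond what is already consumed in invoking Corollary \ref{bigCor}, so the proof reduces to the two short moves described.
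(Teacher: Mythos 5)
Your proposal is correct and is essentially the paper's own (implicit) argument: the corollary is stated without a separate proof precisely because it follows from Corollary \ref{bigCor} together with the generic solution/inverse formulas for an arbitrary brace quoted in Section 2 from \cite[Th. 4.1]{KochTruman20}, which is exactly the substitution $\cdot=\circ_{\psi_i,\alpha}$, $\circ=\circ_{\psi_j,\beta}$ that you perform. Your self-contained verification of the involutivity claim --- reducing $R=R'$ to equality of first coordinates and then using the bijectivity of $h\mapsto g\circ_{\psi_j,\beta}h$ and $g\mapsto\widetilde g$ to conclude commutativity of $(G,\circ_{\psi_i,\alpha})$ --- is sound and supplies a detail the paper leaves unstated.
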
 

Note that it is possible--but messy--to write explicit formulas for $R$ and $R'$ which does not directly use the group operations in the style of \cite[Th. 5.1]{KochStordyTruman20}.

\section{Connections with Previous and Concurrent Works}

In this section, we connect the work presented here with that in the literature. We have already seen through Examples \ref{AbAK} and \ref{AbCS} how the braces constructed here are generalizations of the abelian map constructions. The following shows that we recover the corresponding brace blocks as well.

\begin{example}
	Let $\psi\in\Ab(G)$, and for each $n\ge 0$ let \[\alpha_n = \sum_{i=0}^{n-1} (-1)^{i}\binom n i \psi^i.\] Then $\psi_{\alpha_n} = -(1-\psi)^n+1$, and writing $g\circ_n h$ for $g\circ_{\alpha_n} h$ we obtain the brace block $(G,\{\circ_n:n\ge 0\})$ constructed in \cite{Koch22}.
\end{example}

Note that the results from \cite{CarantiStefanello21a} can also be obtained, though their recursive description of the operation makes it somewhat messy to do so. 

In \cite{CarantiStefanello22} a more general construction is given as follows. (Note that we are changing the notation to better align with the work here.) Let $G_1,G_2\le G$ be chosen such that $G_2\le Z(G)$ and $G_1/G_2:=G_1/G_1\cap G_2$ is abelian. Let $\gamma:G/G_2\to G_1/G_2$ be a homomorphism, and let $\gamma^{\uparrow}:G\to G_1$ be any lifting of $\gamma$, not necessarily a homomorphism. Define \[g\circ h = g\gamma^{\uparrow}(g)h \gamma^{\uparrow}(g)^{-1}.\]  
This operation is independent of choice of lift. Then $(G,\cdot,\circ)$ is a bi-skew brace. One can extend this further by applying a bilinear map to the right of the expression for $g\circ h$ above, but that will not be needed here. By varying $\gamma$ (but keeping $G_1,G_2$ fixed) one obtains a brace block.

Our construction, developed independently, can be seen as a special case of \cite{CarantiStefanello22}. Indeed, let $\psi\in\CC(G)$, and set $G_1=\psi(G)$ and $G_2=Z(G)$. For $\alpha\in \E$ we have $\pa:G\to \psi(G)$ is a lift of a homomorphism $G/Z(G)\to\psi(G)/Z(G)$. More generally, given $\psi_1,\psi_2,\dots, \psi_t\in\CC(G)$ one can take $G_1=\psi_1(G)\psi_2(G)\cdots\psi_t(G)\le G$, keep $G_2=Z(G)$, and recover the full brace block from Corollary \ref{bigCor}. 

One can also observe that $C(\pa(g)):G\to \Aut(G)$ is an example of a ``$\lambda$-homomorphism'' and can be used in the construction of brace blocks as found in \cite{BardakovNeshchadimYadav22}. In that work the authors use the term ``symmetric brace system'' for ``brace block".

Finally, one also can see how our construction is a special case of the classification found in \cite[Th. 6.6]{StefanelloTrappeniers22}, also developed independently of our theory. In that work, adapting notation as needed, braces are constructed by considering homomorphisms $\gamma:G\to M\le\Aut(G),\; g\mapsto \gamma_g$ with $M$ abelian such that $\gamma_{\chi(g)}=\gamma_g$ for all $g\in G,\; \chi\in\gamma(G)$. Given such a choice define 
\[g\circ_{\gamma} h = g \gamma_g(h). \]
Then $(G,\cdot,\circ_{\gamma})$ is a brace; moreover, $(G,\circ_{\gamma_1},\circ_{\gamma_2})$ is a brace for all choices of $\gamma_1,\gamma_2:G\to\Aut(G)$ satisfying the conditions above.

We now relate the Stefanello-Trappeniers construction to the work presented here; note that this connection is essentially an application of \cite[Ex. 6.12]{StefanelloTrappeniers22}. Pick $\psi\in\CC(G)$ and $\alpha\in \E$. Let $\gamma:G\to\Inn(G)$ be given by
\[\gamma_g(h)=\pa(g) h \pa(g)^{-1}.\] 
That is, $\gamma_g=C(\pa(g))$. Since $\psi\in\CC(G)$ it follows that the image of $\gamma$ is an abelian subgroup of $\Inn(G)$, and clearly $g\ca h = g\circ_{\gamma} h$. 

While the theory here encompasses the work in \cite{CarantiStefanello21a} it seems certain that the constructions in \cite{CarantiStefanello22} and \cite{StefanelloTrappeniers22} can be used to construct larger brace blocks than our work above. However, our theory should be easier to implement in general, allowing for the quick creation of brace blocks. For example, \cite{CarantiStefanello21a} requires selecting $G_1,G_2$ in advance of picking the map, whereas in our theory picking the map determines $G_1$ (and $G_2$ is predetermined as well).

\section{Brace Blocks and Hopf-Galois Structures}\label{Hopf}

In this section we show how brace blocks give rise to Hopf-Galois structures on Galois field extensions. To do so, we need to recall the connection between Hopf-Galois structures and certain groups of permutations. Throughout this section, $G$ will denote a \textit{finite} group, initially allowed to be abelian. 

Let $\Perm(G)$ denote the group of permutations of $G$; we denote the image of $g\in G$ under $\eta\in\Perm(G)$ by $\eta[g]$. We say a subgroup $N$ of $\Perm(G)$ is \textit{regular} if for all $g,h\in G$ there exists a unique $\eta\in N$ such that $\eta[g]=h$. Alternatively, $N\le\Perm(G)$ is regular if and only if:
\begin{enumerate}
	\item for all $\eta\ne 1_N$, $\eta[g]\ne g$ for all $g\in G$;
	\item for all $g,h\in G$ there exists an $\eta\in N$ such that $\eta[g]=h$;
\end{enumerate}
If $G$ is finite, we can replace either condition with insisting that $|N|=|G|$.

Two examples of regular subgroups of $\Perm(G)$ are $\lambda(G)$, the group of left regular representations, and $\rho(G)$, the group of right regular representations. 

Of particular interest to us are the regular subgroups of $\Perm(G)$ which are $G$-stable. Here, we say $N\le \Perm(G)$ is \textit{$G$-stable} if, for all $g\in G$, conjugation by $\lambda(g)$ restricts to an automorphism of $N$. This property is also referred to as ``normalized by $G$'' in the literature.
 For $\eta\le\Perm(G)$ we denote $\lambda(g)\eta\lambda(g^{-1})$ by $^g\eta$.

For example, $\lambda(G)$ is obviously $G$-stable, as is $\rho(G)$ since $^g\rho(g)=\lambda(g)\rho(g)\lambda(g^{-1}) = \rho(g)$ for all $g,h\in G$.

Regular, $G$-stable subgroups allow us to find Hopf-Galois structures on $G$-Galois extensions \cite{GreitherPareigis87}: if $L/K$ is Galois with $G=\Gal(L/K)$ and $N\le\Perm(G)$ is regular, $G$-stable then $G$ acts on the group algebra $L[N]$ on $L$ by the Galois action and on $N$ by conjugation by $\lambda(G)$. Let $H=L[N]^G$ be the set of fixed points under this action. Then $H$ is a $K$-Hopf algebra which acts on $L$ providing a Hopf-Galois structure on $L/K$. This Hopf-Galois structure is said to be of \textit{type} $N$, where by abuse of notation $N$ is the abstract group isomorphic to the regular, $G$-stable subgroup above.

For the remainder of this section, we once again assume $G$ is nonabelian (but still finite). Suppose $L/K$ is a finite Galois field extension with $\Gal(L/K)=G$. Let $\psi\in\CC(G),\;\beta\in\E$. For each $g\in G$, let $\eta_g\in\Perm(G)$ be given by \[\eta_g[h]=g\cb h = g\pb(g)h\pb(g)^{-1},\;h\in G.\]

It is useful to observe that we can write $\eta_g=\lambda(g\pb(g))\rho(\pb(g))$ or $\eta_g=\lambda(g)C(\pb(g))$. Let $N=\{\eta_g:g\in G\}$. Since $\eta_g[1_G]=g$ for all $g\in G$ we see that $|N|=|G|$; furthermore, $\eta_g[h]=\eta_g[k]$ implies $C(\pb(g))[h]=C(\pb(g))[k]$ from which it follows that $h=k$. Thus, $N$ is regular. It is also $G$-stable since $^k\eta_g = \eta_{kg\pb(g)k^{-1}\pb(g)^{-1}}$ as can be checked. As $N\le\Perm(G)$ is regular, $G$ stable the Hopf algebra $H=L[N]^G$ acts on $L$, making $L/K$ an $H$-Hopf Galois extension. Note that the argument presented here is a straightworawd generalization of the argument in \cite{Koch21}.

Thus, each $\beta\in\E$ provides a Hopf-Galois structure on $L/K$, and two choices $\beta,\beta'\in\E$ give the same structure if and only if $\psi(\beta-\beta')(G)\subseteq Z(G)$. 

In general, it can be difficult to uncover many of the properties of $H$, however the group-like elements of $H$ can be identified. As is well-known, the group-likes in $H$ are the $\eta\in N$ upon which $G$ acts trivially. In our setting, if $^k\eta_g = \eta_g$ then $kg\pa(g)k^{-1}\pa(g)^{-1} = g$ and the group-likes of $H$ can be seen to correspond to $g\in G$ such that $g\pb(g)\in Z(G)$.

In \cite{SmoktunowiczVendramin18} a slightly different connection occurs between Hopf-Galois structures and braces, a generalization of the work found in \cite{Bachiller16} (who worked with non-skew braces). Given a $G$-Galois extension of type $N$, one typically constructs a brace $(B,\cdot,\circ)$ with $(B,\cdot)\cong N$ and $(B,\circ)\cong G$. With the theory presented here, the constructed brace $(B,\cdot,\circ)$ has $(B,\cdot)\cong G$ and $(B,\circ)\cong N$. We have chosen this latter interpretation, which has been used implicitly in \cite{KochStordyTruman20,Koch21,Koch22} for simplicity: the ``fixed group'' in our discussion is $G$, whose operation is expressed multiplicatively. If one starts with a Hopf-Galois structure constructed using the holomorph of a fixed type $N$ (see \cite{Byott96}) then the perspective in \cite{SmoktunowiczVendramin18} would provide for better notation.

Of course, our brace blocks contain braces other than those of the form $(G,\cdot,\cb)$; more generally, $(G,\circ_{\psi_i,\alpha},\circ_{\psi_j,\beta})$ is a brace (following the notation in Theorem \ref{bigTheorem}). This leads to regular, stable subgroups involving other groups. Clearly, $\Perm(G)=\Perm(G,\circ_{\psi_j,\beta})$ so the $N$ constructed above is a regular subgroup of $\Perm(G,\circ_{\psi_j,\beta})$ isomorphic to ($G,\circ_{\psi_i,\alpha}$). It is also normalized by conjugation in $(G,\circ_{\psi_j,\beta})$: if $g \circ_{\psi_i,\alpha} \widetilde g = g\circ_{\psi_j,\beta}\overline g = 1_G$ then 
\[^k\eta_g = \eta_{k\circ_{\psi_j,\beta} (g\circ_{\psi_i,\alpha}\widetilde k)}=\eta_{(k\circ_{\psi_j,\beta} g)\circ_{\psi_i,\alpha}\overline k}=\eta_{k\psi_{j,\beta}(k)g\psi_{j,\beta}(k)^{-1}\psi_{i,\alpha}(g)\psi_{i,\alpha}(k)^{-1}k^{-1}\psi_{i,\alpha}(k)\psi_{i,\alpha}(g)^{-1}}.\]
Thus $H=L[(G,\circ_{\psi_i,\alpha})]^{(G,\circ_{\psi_j,\beta})}$ gives a Hopf-Galois structure on any $(G,\circ_{\psi_j,\beta})$ Galois extension of type $(G,\circ_{\psi_i,\alpha})$. 

We can generalize this to any brace block with finite underlying set.
\begin{proposition}
	Let $(B,\mathscr O)$ be a brace block with $B$ finite. Then there exist Hopf-Galois structures on a $(G,\circ)$-Galois extension of type $(G,\star)$ for all $\circ,\star\in \mathscr O$.
\end{proposition}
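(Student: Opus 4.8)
The plan is to reduce this proposition to the concrete permutation-group construction already carried out earlier in Section~\ref{Hopf} for braces of the form $(G,\ca,\stb)$. The key observation is that the earlier argument never really used the specific formula $\eta_g=\lambda(g)C(\pb(g))$; it used only the abstract brace structure to show that a certain regular subgroup is $G$-stable. So first I would strip the construction down to its brace-theoretic essentials: given a brace block $(B,\mathscr O)$ with $B$ finite, fix two operations $\circ,\star\in\mathscr O$. Set $G=(B,\star)$, which is a finite group, and suppose $L/K$ is a finite Galois extension with $\Gal(L/K)\cong G$. For each $b\in B$ define $\eta_b\in\Perm(B)$ by $\eta_b[c]=b\circ c$. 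Since $(B,\circ)$ is a group, left translation shows $N=\{\eta_b:b\in B\}$ is a regular subgroup of $\Perm(B)=\Perm(G)$ isomorphic to $(B,\circ)$.

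The heart of the matter is then to verify that $N$ is $G$-stable, i.e.\ normalized under conjugation by $\lambda(G)=\lambda((B,\star))$, where here $\lambda(b)$ is left translation in $(B,\star)$. This is exactly where the brace relation enters. Writing $^k\eta_b=\lambda(k)\eta_b\lambda(k)^{-1}$ and denoting by $\overline{k}$ the inverse of $k$ in $(B,\star)$, one computes
\[
{}^k\eta_b[c]=k\star\bigl(b\circ(\overline{k}\star c)\bigr).
\]
The goal is to show this equals $\eta_{b'}[c]=b'\circ c$ for a $b'$ independent of $c$. Using that $(B,\star,\circ)$ is a brace (every brace in a brace block is bi-skew, so $(B,\star,\circ)$ is indeed a brace), the brace relation lets me expand $b\circ(\overline{k}\star c)$ and reorganize so that the $k\star(\cdots)$ telescopes into a single $\circ$-translation. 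Concretely, I expect $b'=(k\star b)\circ\overline{k}$, mirroring the formula ${}^k\eta_g=\eta_{(k\circ_{\psi_j,\beta}g)\circ_{\psi_i,\alpha}\overline k}$ that already appears in the excerpt for the special case. Once $b'$ is identified, $\eta_b\mapsto\eta_{b'}$ is conjugation by a bijection of $N$, so $N$ is $G$-stable.

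With regularity and $G$-stability established, the proposition follows immediately from the Greither--Pareigis machinery recalled earlier: $H=L[N]^{G}$ is a $K$-Hopf algebra giving a Hopf-Galois structure on $L/K=(B,\star)$-Galois of type $(B,\circ)=(G,\circ)$. Running over all choices $\circ,\star\in\mathscr O$ yields the full statement.

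The step I expect to be the main obstacle is the $G$-stability verification, since it requires applying the brace relation in the ``crossed'' direction $(B,\star,\circ)$ rather than $(B,\circ,\star)$ and carefully tracking inverses in two different group operations on the same set; the bookkeeping is where an error is most likely to creep in. Everything else---regularity, the isomorphism type of $N$, and the invocation of Greither--Pareigis---is routine given the earlier exposition. It is worth noting that this argument is purely formal and uses only that $(B,\star,\circ)$ is a brace, so it in fact proves the statement for an arbitrary finite brace, with the brace block merely supplying the family of compatible operations indexed by $\mathscr O$.
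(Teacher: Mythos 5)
Your overall plan --- take $N=\{\eta_b:\eta_b[c]=b\circ c\}$, note it is regular and isomorphic to $(B,\circ)$, prove it is normalized by the left translations $\lambda_\star(B)$ of $(B,\star)$, and invoke Greither--Pareigis --- is exactly the argument the paper intends; it is the abstract form of the computation carried out just before the proposition. The gap is in the stability step: you have the two brace structures in the bi-skew pair playing the wrong roles. To show that $\lambda_\star(B)$ normalizes $N=\lambda_\circ(B)$ you need the brace relation of $(B,\circ,\star)$ (additive group $(B,\circ)$, circle group $(B,\star)$), i.e.\ the relation in which $\star$ distributes over $\circ$:
\[
k\star(x\circ y)=(k\star x)\circ\widetilde{k}\circ(k\star y),\qquad \widetilde{k}\ \text{the inverse of } k \text{ in } (B,\circ).
\]
Applying this to the \emph{outer} factor of ${}^k\eta_b[c]=k\star\bigl(b\circ(\overline{k}\star c)\bigr)$ finishes in one line:
\[
{}^k\eta_b[c]=(k\star b)\circ\widetilde{k}\circ\bigl(k\star(\overline{k}\star c)\bigr)=(k\star b)\circ\widetilde{k}\circ c,
\]
so ${}^k\eta_b=\eta_{(k\star b)\circ\widetilde{k}}\in N$. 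What you propose instead --- expanding the \emph{inner} factor $b\circ(\overline{k}\star c)$ via the brace $(B,\star,\circ)$ --- gives ${}^k\eta_b[c]=k\star(b\circ\overline{k})\star\overline{b}\star(b\circ c)$, i.e.\ ${}^k\eta_b=\lambda_\star(\mu)\eta_b$ with $\mu=k\star(b\circ\overline{k})\star\overline{b}$. That is a $\star$-translate of $\eta_b$, not visibly an element of $N$, and no telescoping into a single $\circ$-translation follows from that relation alone; you would need the other relation anyway. Relatedly, your predicted formula $b'=(k\star b)\circ\overline{k}$ takes the inverse in the wrong group: the correct value is $b'=(k\star b)\circ\widetilde{k}=k\star(b\circ\overline{k})$. (Test it on $g\circ h=g\psi(g^{-1})h\psi(g)$ with $\psi$ abelian and $\star=\cdot$\,: your formula is valid only when $k$ commutes with $\psi(k)$. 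The paper's displayed conjugation formula appears to have $\widetilde{k}$ and $\overline{k}$ interchanged, so mirroring it led you astray.)

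The mix-up is not mere bookkeeping, because the two conditions are genuinely different. ``$(B,\star,\circ)$ is a brace'' is equivalent, via the Guarnieri--Vendramin correspondence, to $\lambda_\circ(B)\le\Hol(B,\star)$, i.e.\ to $\lambda_\circ(B)$ normalizing $\lambda_\star(B)$ --- the opposite of the Greither--Pareigis condition you need, which is that $\lambda_\star(B)$ normalizes $\lambda_\circ(B)$ and which is equivalent to $(B,\circ,\star)$ being a brace. (This is precisely the difference between the two conventions the paper contrasts when discussing the Smoktunowicz--Vendramin correspondence.) Inside a brace block every pair of operations forms a brace in both orders, so the hypothesis you need is available and your conclusion stands once the computation is redirected; your swapping of the labels $\circ$ and $\star$ relative to the statement is also harmless, since both range over all of $\mathscr O$. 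But your closing claim --- that the argument works for an arbitrary finite brace $(B,\star,\circ)$ --- is false as stated: if the brace is not bi-skew, then $\lambda_\star(B)$ does \emph{not} normalize $\lambda_\circ(B)$, and the construction yields a Hopf--Galois structure only in the opposite direction, namely on a $(B,\circ)$-Galois extension of type $(B,\star)$ (Galois group the circle group, type the additive group).
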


Finally, we observe that if we consider the sub-brace block $(G,\mathscr O')$ where $(G,\circ)\cong G$ for all $\circ\in\mathscr O'$ then there is a strong relationship between $(G,\mathscr O')$ and normalizing graphs as described by Kohl in \cite{Kohl22}.

\section{Brace Blocks on Groups of Nilpotency Class Two and Opposite Braces}

In general, the brace block obtained from $G$ depends on the choice of $\psi\in\CC(G)$. However, this is greatly simplified in the case where $G$ is of nilpotency class exactly two. Recall that $G$ is of nilpotency class one if $G$ is abelian, and of nilpotency class two if $[G,G]\le Z(G)$. As we have assumed throughout that $G$ is nonabelian we use ``nilpotency class two'' to mean nilpotency class exactly two.

Suppose $[G,G]\le Z(G)$. Then the identity map is commutator-central, so we may take $\psi=1$. We then have
\[g\ca h = g\alpha(g)h\alpha(g)^{-1},\;g,h\in G\] 
for all $\alpha\in\E$. It is important to notice that, while there are other choices for commutator-central maps there is no compelling reason to use $\psi\ne 1$. Indeed, suppose $\psi\in\CC(G),\psi\ne 1$. We then obtain the brace block $(G,\{\circ_{\psi,\beta} : \beta\in \E\})$. Then $\psi\beta\in \E$ as well, so 
\[g \circ_{\psi,\beta} h = g\circ_{1,\psi\beta}.\]
In other words, $(G,\{\circ_{\psi,\beta}:\beta\in\E\})\subseteq (G,\{\circ_{1,\alpha}:\alpha \in \E\})$ for all $\psi\in\CC(G)$.

Another interesting aspect of the theory in nilpotency class two can be found when considering the theory of opposite braces. Developed independently in \cite{KochTruman20} and \cite{Rump19}, the opposite of a brace provides the inverse to the given solution to the Yang-Baxter equation and provides the opposite Hopf-Galois structure.

Given a brace $(B,\cdot,\circ)$, the \textit{opposite} brace is $(B,\cdot',\circ)$ where $a\cdot' b = ba$. In other words, $(B,\cdot')$ is the opposite group to $(B,\cdot)$. It is not hard to show that the map $a\mapsto a^{-1}$ is a brace isomorphism $(B,\cdot',\circ)\to(B,\cdot,\widehat{\circ})$ where
\[a\widehat{\circ} b = (a^{-1}\circ b^{-1})^{-1},\]
and we can think of $(B,\cdot,\widehat{\circ})$ as the opposite brace to $(B,\cdot,\circ)$ as well. Note that the corresponding solution to the YBE is not inverse to the original brace but becomes so upon applying $a\mapsto a'$ again.

Return to $G$ of nilpotency class two and $\psi=1$, and let $\alpha\in \E$. Then
\[g\widehat{\circ}_{\alpha} h = (g^{-1}\alpha(g^{-1})h^{-1}\alpha(g^{-1})^{-1})^{-1} = \alpha(g^{-1})h\alpha(g^{-1})^{-1}g\]
Let $\alpha = \phi_1+\phi_2+\cdots+\phi_t$ we define $\alpha^*=\phi_t+\phi_{t-1}+\cdots+\phi_1$. In light of Lemma \ref{theLemma} (\ref{d}) we have $g\ca h = g\circ_{\alpha^*} h$, of course; furthermore $\alpha(g^{-1})=\alpha^*(g)^{-1}$.  Let $\beta=-1-\alpha^*\in \E$. Then $\beta(g)=g^{-1}\alpha^*(g)^{-1} = g^{-1}\alpha(g^{-1})$ and we get
\begin{align*}
	g\cb h &= g\beta(g)h\beta(g)^{-1}\\
	&=g(g^{-1}\alpha(g^{-1}))h(g^{-1}\alpha(g^{-1}))^{-1}) \\
	&=\alpha(g^{-1})h\alpha(g^{-1})^{-1}\\
	&=g\widehat{\circ}_{\alpha} h.
\end{align*}

We thus obtain the following.
\begin{proposition}
	Let $G$ be of nilpotency class two, and let $\psi=1$. Then $(G,\cdot,\ca)$ is in a brace block if and only if $(G,\cdot,\widehat{\circ}_{\alpha})$ is.
\end{proposition}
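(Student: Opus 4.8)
The plan is to read the result off the identity established in the display immediately preceding the statement, namely $\widehat{\circ}_{\alpha}=\circ_{\beta}$ with $\beta=-1-\alpha^{*}\in\E$. This is the whole point: although $(G,\cdot,\widehat{\circ}_{\alpha})$ is the opposite of $(G,\cdot,\ca)$ and a priori need not arise from our construction at all, the computation shows that its operation $\widehat{\circ}_{\alpha}$ coincides with the construction operation $\circ_{\beta}$ for a bona fide $\beta\in\E$. First I would record the ambient fact that, since $[G,G]\le Z(G)$ and $\psi=1$, the hypothesis $[\psi_i(G),\psi_j(G)]\subset Z(G)$ of Corollary \ref{bigCor} is automatic (with $t=1$, $\psi_1=1$), so the full family $\{\circ_{\gamma}:\gamma\in\E\}$ is a single brace block; note also that $\cdot=\circ_{0}$, taking $\gamma=0$. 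Hence every $(G,\cdot,\circ_{\gamma})$ with $\gamma\in\E$ already lies in this brace block.

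For the forward implication I need only exhibit one brace block containing $\cdot$ and $\widehat{\circ}_{\alpha}$. By the identity above, $\widehat{\circ}_{\alpha}=\circ_{\beta}$ with $\beta\in\E$, so $(G,\cdot,\widehat{\circ}_{\alpha})=(G,\circ_{0},\circ_{\beta})$ is one of the braces of the block $\{\circ_{\gamma}:\gamma\in\E\}$; equivalently $(G,\{\cdot,\widehat{\circ}_{\alpha}\})$ is a (sub-)brace block. The reverse implication is symmetric once I verify that $\Phi\colon\alpha\mapsto -1-\alpha^{*}$ is an involution of $\E$, so that the opposite of $\circ_{\beta}$ is again $\ca$; granting this, the same argument applied to $\beta$ in place of $\alpha$ gives the converse. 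In fact both sides of the equivalence are realized unconditionally inside $\{\circ_{\gamma}:\gamma\in\E\}$, so the statement amounts to the assertion that this family is closed under passage to opposite braces.

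The one genuinely computational point — and the only place where care is needed — is confirming that $\Phi$ is an involution, i.e. that $-1-(-1-\alpha^{*})^{*}=\alpha$ in $\E$. The hazard is the interaction between the reversal $(\;)^{*}$, the formal inverse $-(\;)$ in the nonabelian free group $\E$, and the leading $-1$. I would either argue directly, using that for a letter $a^{-1}=-a$, that $(\;)^{*}$ reverses words, and that $-(\;)$ reverses words and negates letters, so that $-(-1-\alpha^{*})^{*}=1+\alpha$ and therefore $-1-(-1-\alpha^{*})^{*}=\alpha$; or, more robustly, compare induced operations rather than elements of $\E$, since two exponents $\gamma,\gamma'$ give the same operation exactly when $(\gamma-\gamma')(G)\subseteq Z(G)$ (the $\psi=1$ case of the criterion in Section \ref{Hopf}), at which point Lemma \ref{theLemma}(\ref{d}) together with $\alpha(g^{-1})=\alpha^{*}(g)^{-1}$ reduces the check to a single congruence modulo $Z(G)$. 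Either way this bookkeeping is the step I expect to be fiddly; everything else follows immediately from the preceding display and Corollary \ref{bigCor}.
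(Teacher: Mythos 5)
Your proof is correct and takes essentially the same route as the paper: the paper also obtains the proposition directly from the preceding display $\widehat{\circ}_{\alpha}=\circ_{\beta}$ with $\beta=-1-\alpha^{*}\in\E$, combined with the fact that all the operations $\circ_{\gamma}$, $\gamma\in\E$ (including $\cdot=\circ_{0}$), form a single brace block by Corollary \ref{bigCor}. Your explicit check that $\alpha\mapsto-1-\alpha^{*}$ is an involution on $\E$ (which is indeed valid, since $-(-1-\alpha^{*})^{*}=1+\alpha$ in the free group) simply makes precise the symmetry that the paper leaves implicit.
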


\section{Examples}

We conclude with four examples that exhibit very different behavior.

\begin{example}[The group $Q_8$]\label{Q8}
	Let $G=Q_8$ be the group of quaternions. Write $G=\gen{a,b:a^4=b^4=a^2b^2=abab^{-1}=1_G}$. Since $[G,G]=Z(G)=\gen{a^2}$ we may take $\psi=1\in\CC(G)$ as above. Define $\psi_1,\psi_2,\psi_3,\psi_4\in\End(G)$ by
	\[\begin{array}{c | c c c c}
		\;&\phi_1&\phi_2&\phi_3&\phi_4\\\hline
		a & a^3b & a^2b & b & a^3\\
		b & a^3 & a^3 & ab & ab
		\end{array},\]
	and let $\sigma\in \Perm(\{a,b,ab\})$. We will find regular, $G$-stable subgroups of $\Perm(G)$ from which the braces can be readily computed. Recall that $\psi\in\CC(G),\;\alpha\in \E$ gives us $N=\{\eta_g:g\in G\}$ with $\eta_g[h]=\lambda(g\pa(g))\rho(g)[h] = g\pa(g)h\pa(g)^{-1}$, hence the circle operation in the corresponding brace is $g\ca h = \eta_g[h]$. 
	
	We will show how certain choices of $\alpha\in \E$ give us different regular subgroups $N\le\Perm(G)$. First, let $\alpha = 0$. Then $\eta_g=\lambda(g\cdot 1_G)\rho(1_G)=\lambda(g)$ hence $N=\lambda(G)$. On the other hand, taking $\alpha=-1$, the map-to-inverse function, gives $\eta_g=\lambda(g\cdot g^{-1})\rho(g^{-1})=\rho(g^{-1})$ and $N=\rho(G)$.
	
	Now let $\alpha=\phi_1$. Then 
	\begin{align*}
		\eta_{a^2}&=\lambda(a^2\cdot a^2)\rho(a^2) = \lambda(a^2)\\
		\eta_{a^3b} &=\lambda((a^3b)(b))\rho(b) = \lambda(a)\rho(b)\\
		\eta_{a^3} &=\lambda(a^3 (ab))\rho(ab) = \lambda(b)\rho(ab),
	\end{align*}
and as these three permutation have order $2$ and commute we get that \[N=\gen{\lambda(a^2),\lambda(a)\rho(b),\lambda(b)\rho(ab)} = \gen{\lambda(a^2)}\times\gen{\lambda(a)\rho(b)}\times\gen{\lambda(b)\rho(ab)}\cong C_2\times C_2\times C_2.\]

	Next, let $\alpha=\phi_2+\phi_3$. Then $\alpha(a)=\phi_2(a)\phi_3(a) = a^2b\cdot b = 1_G$. We get
	\[
		\eta_a = \lambda(a\cdot 1_G)\rho(1_G) = \lambda(a),
	\]
	and since $\alpha(a^2b)=\phi_2(a^2b)\phi_3(a^2b)=a\cdot a^3b = b$ we have
	\[\eta_{a^2b}=\lambda(a^2b\cdot b)\rho(b) = \rho(b)\]
	and $N=\gen{\lambda(a),\rho(b)}\cong C_4\times C_2$.
	
	Picking $\alpha=\phi_4$ gives
	\[\eta_{a^3} = \lambda(a^3\cdot a)\rho(a) = \rho(a),\;\eta_{a^3b} = \lambda(a^3b\cdot a^2b)\rho(a^2b) = \lambda(a)\rho(b)\] and $N=\gen{\rho(a),\lambda(a)\rho(b)}\cong D_4$, the dihedral group of order $8$.
	
	Lastly, let $\alpha=-1-\phi_4$. Then $\alpha(a)=a^{-1}(a^3)^{-1} = 1_G,\;\alpha(ab)=(ab)^{-1}b^{-1} = a^3$ and hence
	\[ \eta_{a}=\lambda(a),\;\eta_{ab}=\lambda(ab\cdot a^3)\rho(a^3) = \lambda(aba)\rho(a) =  \lambda(b)\rho(a).\]
	With this choice of $\alpha$ we have $N=\gen{\lambda(a),\lambda(b)\rho(a)}\cong D_4$.
	
	By permuting the generators via $\sigma$--that is, replacing $\phi_i$ with $\sigma\phi_i\sigma^{-1}$ above--we obtain a total of $16$ different regular, $G$-stable subgroups of $\Perm(G)$. This agrees with the classification of regular, $G$-stable subgroups of $Q_8$ as found in \cite{TaylorTruman19}: our construction here yields every Hopf-Galois structure of non-cyclic type. A description of all of the choices for the elements of $\E$ appears below.
	
	\[\begin{array}{c | c c c c c c c c }
	 & 0 & -1 & \phi_{1,a,b} & \phi_{1,b,a} & \alpha_{a,b} & \alpha_{a,ab} & \alpha_{b,a} & \alpha_{b,ab} \\\hline
	a & 1 & a^3 & a^3b & a^2 b & 1 & 1 & a & a^3b \\
	b& 1 & a^2b & a^3 & ab & b & ab & 1 & 1 \\\hline\hline
	& \alpha_{ab,a} & \alpha_{ab,b} & \phi_{4,a,b} & \phi_{4,b,a}& \phi_{4,ab,b} & \beta_{a,b} & \beta_{b,a} & \beta_{ab,b}\\\hline
	a & a & b & a^3 & a^3b & a^2 b & 1 & b & a^3b\\
	b & a^3 & a^2b & ab & a^2b & a^3 & a & 1 & ab\\
	
\end{array}\]
The extra subscripts indicate the particular choices for $\sigma(a)$ and $\sigma(b)$; additionally we write  $\alpha=\phi_2+\phi_3$ and $\beta = -1-\phi_4$ for brevity. Recall that two choices of elements in $\E$ produce the same brace if and only if they differ by an element of the center; since $x=yz,\;x,y\in Q_8,\;z\in Z(Q_8)$ holds if and only if $x=y$ or $x=y^{-1}$ it is easy to see from inspecting the table above that these all produce distinct braces. Keep in mind that we refer to identical braces, not isomorphic braces: some of the braces constructed here will be isomorphic, as shown in \cite[Ex. 3.6]{KochTruman23}. Note that $\alpha_{s,t}$ and $\beta_{s,t},\;s\ne t\in\{a,b,ab\}$ are not completely determined from the generators shown above, but a full determination is not needed here. 
	
	Furthermore, the brace block constructed above is maximal: there is no choice of $\alpha$ such that $(G,\ca)\cong C_8$. This follows directly from the observation that 
	\[|\eta_g| = |\lambda(g\alpha(g))\rho(g)| \le |\lambda(g\alpha(g))|\cdot|\rho(g)| = \operatorname{lcm}(|\lambda(g\alpha(g))|,|\rho(g)|)\le 4.\]
	Alternatively, by \cite[Th. 5.1]{Byott07} we know that a $C_8$-Galois cyclic extension has no Hopf-Galois structures of type $C_4\times C_2$ or $C_2\times C_2\times C_2$. If there was an $\alpha\in\E$ such that $(G,\ca)\cong C_8$ then the brace block would include a brace $(G,\circ,\star)$ with $(G,\circ)\cong C_8$ and $(G,\star)\cong C_4\times C_2$ and, hence, a Hopf-Galois structure on a $C_8$ extension of type $C_4\times C_2$.
\end{example}

\begin{remark}
	The above example illustrates the necessity of using $\E$ instead of $\End(G)$. For example, we use $\alpha=\phi_2+\phi_4\not\in\End(G)$ to obtain the group $\gen{\lambda(a),\rho(b)}$. Suppose there exists an $\phi\in \End(G)$ which gives $N=\gen{\lambda(a),\rho(b)}$. As usual, write $N=\{\eta_g:g\in G\}$. Then $\eta_a=\lambda(a\phi(a))\rho(\phi(a))=\lambda(a)$ implies that $\phi(a)\in Z(G)$, hence $\phi(a)=1_G$ or $\phi(a)=a^2$. On the other hand, \[\eta_b=\rho(b^{-1})=\rho(a^2b) = \lambda(b\phi(b))\rho(\phi(b)),\]
	and so $\phi(b)=b$ or $\phi(b)=a^2b$. Regardless of the choices made above, $\phi(a^2)=\phi(a)^2=1_G$ and $\phi(b^2)=\phi(b)^2\ne 1_G$ which is a contradiction since $a^2=b^2$. Thus we do need to allow sums of endomorphisms to obtain this brace block.
\end{remark}

In the quaternion example, the brace block is determined by varying $\alpha\in \E$ but not $\psi\in\CC(G)$. The following example relies exclusively on our ability to change $\psi$. 

\begin{example}[The symmetric groups]
	Let $G=S_n$ be the symmetric group of order $n$ with $n\ge 5$. Since $Z(S_n)$ is trivial we have $\CC(S_n)=\Ab(S_n)$. Thus the elements of $\CC(S_n)$ have already been described in \cite[Ex. 3.7]{Koch21}: for $\tau\in S_n,\;\tau^2=1_G$ we have
	\[\psi(\sigma)=\begin{cases} 1 & \sigma\in A_n \\ \tau & \sigma\notin A_n \end{cases}.\]
	Furthermore, the braces constructed here, combined with their opposites, account for all braces $(G,\cdot,\circ)$ with $(G,\cdot)=S_n$ (see the classification given in \cite{CarnahanChilds99}).
	
	Now for all $\alpha\in \E$ we have $\pa(G)\subset\psi(G)=\gen{\tau}$. If we pick $\tau'\ne \tau,\;\tau'^2=1_G$ we have another map, say $\psi'\in\CC(G)$, and since $\psi'(G)=\gen{\tau'}$ it is clear that $\psi'\ne\pa$ for any $\alpha\in \E$. Because of this, we can always pick $\alpha\in \E$ to be $\alpha=1$ (or $\alpha=0$ to obtain the trivial brace when $\tau\ne 1_G$). This would create a brace block with at most two binary operations.
	
	Let us for now change notation and write $g\circ_{\tau} h$ for this operation.
	If $\tau\tau'=\tau'\tau$ then $(G,\circ_\tau,\circ_\tau')$ forms a brace. This allows us to generate brace blocks of a different flavor than the one constructed in Example \ref{Q8}. Let $T$ be a maximal elementary abelian $2$-group inside $S_n$. Then $T\cong C_2^{{\lfloor n/2\rfloor}} $, and \[(G,\{\circ_{\tau}:\tau\in T\})\] forms a brace block. 
	
	As $T$ is not unique we obtain several brace blocks here. For example, in $S_5$ there are $5$ choices for $T$, which can be parameterized by the element in $\{1,2,3,4,5\}$ remaining fixed (so, e.g., $T_4=\{\iota, (12), (35), (12)(35)\}$. This leads to five distinct brace blocks, whose ``intersections'' (suitably defined) are easy to compute.
\end{example}

\begin{example}[Metacyclic, order $pq$]
	Let $M=M_{p,q}=\gen{s,t:s^p=t^q=tst^{-1}s^{-d}=1_M}$, where $p\equiv 1\pmod q,\;d^q\equiv 1\pmod p,\;d\not\equiv 1 \pmod p$. In \cite[Ex, 6.4]{Koch22} we show how a brace block can be constructed using $\psi(s)=1_M,\;\psi(t)=t^{1-j}$ where $j$ is a primitive root mod $q$. The brace block, consisting of $q-1$ binary operations, is formed by taking $\psi_n=-(1-\psi)^n+1$. It turns out that $\psi_n(s)=1_M$ and $\psi_n(t)=t^{1-j^n}$. While the sequence of constructed binary operations follows a pattern, if we forget ordering we see that all of the binary operations have the form
	\[g\circ_n h = g\psi_n(g)h\psi_n(g)^{-1},\;\psi_n(s)=1_M,\;\psi_n(t)=t^n,\; 2\le n\le p.\]
	A more direct way to construct this block is to observe that $\psi_n(s)=1,\;\psi_n(t)=t^n,\;2\le n \le p$ is commutator-central (in fact, abelian), and since $\psi_n(G)\le \gen t$ for all $n$ we see that $\{\psi_n:2\le n \le p\}$ consists of pairwise commuting commutator-central maps, hence 
	\[(G,\{\circ_n:g\circ_n h = g\psi_n(g)h\psi_n(g)^{-1},\;2\le n \le p\})\]
	is a more direct way to express the brace block above.
\end{example}

\begin{example}[Matrix groups]
	Let $F$ be any field, not necessarily finite, and let $G=\GL_n(F)$ be the group of invertible $n\times n$ matrices with entries in $F$. For $1\le t \le n$ define $\psi_t:G\to G$ by letting $\psi_t(A)$ be the elementary matrix obtained by multiplying the $t^{\text{th}}$ row of the identity matrix by $d:=\det A$. This is clearly an endomorphism of $G$, and since $\psi$ preserves determinants and $\det([A,B])=1$ we have $\psi([G,G])=I$ so $\psi\in\Ab(G)$. (It is well known that $[G,G]=\SL_n(F)$ provided $G\ne\GL_2(\mathbb F_2)$, and $[\GL_2(\mathbb F_2),\GL_2(\mathbb F_2)]$ is the unique subgroup of order $3$, but all we need here is that $[G,G]\subseteq \SL_n(F)$.) Setting $\alpha=1$ gives $A\circ_{\psi_t,1} B =  A\psi_t(A)B\psi_t(A)^{-1}=AB'$, where $B=[b_{i,j}],\; B'=[b'_{i,j}]$ and

	\[b'_{i,j} = 
	\begin{cases}
		b_{i,j}d & i=t,\;j\ne t\\
		\frac{b_{i,j}}d & i\ne t,\;j=t\\
		b_{i,j} & \text{ otherwise} 
	\end{cases}.\] 	
	Furthermore, $\psi_t(G)$ is contained in the abelian subgroup $D\le\GL_n(F)$ of diagonal matrices, hence $[\psi_t(G),\psi_u(G)]=1_G\in Z(G)$ for all $1\le t,u \le n$. Thus we may use these $n$ commutator-central (in fact, abelian) maps to form blocks.
	
	Let us specialize to the case $G=\GL_2(\mathbb F_p(\!(x)\!))$ where $\mathbb F_p(\!(x)\!),\;p>2$ is the function field over $\mathbb F_p$ in one indeterminant. Then $\phi: G\to G$ given by $\phi([a_{i,j}])=[a_{i,j}^p]$ is an endomorphism, hence so is $\phi^r$ for all $r$. For $f(t)\in \mathbb Z[t]$ let $\alpha_f\in \E$ be given by $\alpha_f=f(\phi)$. Then  
	
	\[\alpha_f\left(\begin{bmatrix}x & 0 \\ 0 & 1\end{bmatrix}\right)=\begin{bmatrix} x^{f(p)} & 0 \\ 0 & 1\end{bmatrix}\]
	and we obtain the group operations
	\begin{align*}\begin{bmatrix}a & b \\ c & d\end{bmatrix}\circ_{\psi_1,\alpha_f} \begin{bmatrix}k & \ell \\ m & n\end{bmatrix}=\begin{bmatrix}ak+bm\delta^{-f(p)} & a\ell\delta^{f(p)}+bn \\ ck+dm\delta^{-f(p)} & c\ell\delta^{f(p)}+dn\end{bmatrix}\\
	\begin{bmatrix}a & b \\ c & d\end{bmatrix}\circ_{\psi_2,\alpha_f} \begin{bmatrix}k & \ell \\ m & n\end{bmatrix}=\begin{bmatrix}ak+bm\delta^{f(p)} & a\ell\delta^{-f(p)}+bn \\ ck+dm\delta^{f(p)} & c\ell\delta^{-f(p)}+dn\end{bmatrix},	
	\end{align*}
	where $\delta=\det A\in\mathbb F_p(\!(x)\!)$.
	We can see that $(G,\circ_{\psi_t,\alpha_{f_1}})=(G,\circ_{\psi_u,\alpha_{f_2}})$ if and only if $f_1(t)\equiv(-1)^{t-u}f_2(t)\pmod {p\mathbb Z[t]}$. Thus the resulting brace block is infinite.
	
Return to the general case $G=\GL_n(F)$.	While it may be tricky to determine the isomorphism class of the groups $(G,\circ):=(G,\circ_{\psi_t,\alpha_f})$ we do know that $\alpha=0$ results in a group isomorphic to $\GL_n(F)$, and $\alpha=-1$ results in a group isomorphic to $F^{\times}\times \SL_n(F)$. The former constructs the trivial brace, and the latter can be seen (shown here in the case $t=1$) by observing that $A\in\GL_n(F)$ can be written as $A=A_0A_1$, where 
	\[A_0=\begin{bmatrix} \delta & 0 \\ 0 & 1 \end{bmatrix},\;\delta=\det(A)\]
	and $A_1\in\SL_n(F)$. Then $\psi_1(A)=\psi_1(A_0A_1)=A_1$ and we get
	\begin{align*}
		A\circ B &= A \psi_1(A^{-1})B\psi_1(A^{-1})^{-1}\\
		&= (A_0A_1) A_1^{-1} B A_1 \\
		&= A_0 B A_1.
	\end{align*}
	The isomorphism $F^{\times}\times \SL_n(F)\to(G,\circ)$ is given by $(\delta,A_1)\mapsto A$, where $A$ is obtained from $A_1$ by multiplying the top row by $\delta$. 
\end{example}

\bibliographystyle{alpha} 
\bibliography{../../MyRefs}
\end{document}